\setlist{nosep}
\theoremstyle{definition}
\newtheorem{defin}{Definition}[section]
\theoremstyle{plain}
\newtheorem{theo}[defin]{Theorem}
\newtheorem{lem}[defin]{Lemma}
\newtheorem{pro}[defin]{Proposition}
\newtheorem{cor}[defin]{Corollary}
\theoremstyle{definition}
\newtheorem{exm}[defin]{Example}
\newtheorem{rem}[defin]{Remark}
\renewcommand{\H}{\mathcal{H}}
\newcommand{\rn}{\mathfrak{n}}
\newcommand{\n}[1]{\|#1\|}
\newcommand{\nor}{\|\cdot\|}
\renewcommand{\l}{\langle}
\renewcommand{\r}{\rangle}
\newcommand{\N}{\mathbb{N}}
\newcommand{\Z}{\mathbb{Z}}
\newcommand{\R}{\mathbb{R}}
\newcommand{\C}{\mathbb{C}}
\newcommand{\pint}{\l\cdot,\cdot\r}
\newcommand{\pin}[2]{\l#1 , #2\r}
\newcommand{\no}{\noindent}
\newcommand{\mez}{\frac{1}{2}}
\renewcommand{\phi}{\varphi}%\red
\numberwithin{equation}{section}
\renewcommand\labelenumi{\emph{(\roman{enumi})}}
\renewcommand\theenumi\labelenumi
\fillast \fontsize{12}{15}\scshape}{\thesection.}{0.8em}{}
\fillast \fontsize{11}{12}\scshape}{\thesubsection.}{0.8em}{}
\begin{document}
	
\thispagestyle{plain}

\begin{center}
	\large
	{\uppercase{\bf Localization of the spectra of dual frames multipliers}} \\
	\vspace*{0.5cm}
	{\scshape{Rosario Corso}}
\end{center}

\normalsize 
\vspace*{1cm}	

\small 

\begin{minipage}{12.5cm}
	{\scshape Abstract.} This paper concerns dual frames multipliers, i.e. operators in Hilbert spaces consisting of analysis, multiplication and synthesis processes, where the analysis and the synthesis are made by two dual frames, respectively. The goal of the paper is to give some results about the localization of the spectra of dual frames multipliers, i.e. to individuate regions of the complex plane containing the spectra using some information about the frames and the symbols.
\end{minipage}

\vspace*{.5cm}

\begin{minipage}{12.5cm}
	{\scshape Keywords:} multipliers, dual frames, spectrum.
\end{minipage}

\vspace*{.5cm}

\begin{minipage}{12.5cm}
	{\scshape MSC (2010):}  42C15, 47A10, 47A12.
\end{minipage}

%\vspace*{1cm}
\normalsize

\section{Introduction}

Frame multipliers have been objects of several studies \cite{Balazs_inv_mult2,Corso_mult1,Balazs_inv_mult,Detail_mult,Riesz_mult,Dual_mult,Comm_mult} and applications in physics \cite{Gazeau}, signal processing (in particular, Gabor multipliers \cite{FeiNow,Matz} attracted interest as time-variant filters) and acoustics \cite{BHNS,Balazs_appl}. Details about applications are discussed also in the survey \cite{Balazs_surv}.  

Frame multipliers are part of the Bessel multipliers which were introduced in \cite{Balazs_basic_mult} and we are going to recall. A {\it Bessel sequence} of a separable Hilbert space $\H$ is a sequence $\varphi=\{\varphi_n\}_{n\in \N}$ of elements of $\H$ such that there exists $B_\varphi>0$ and 
$$
\sum_{n\in \N} |\pin{f}{\varphi_n}|^2\leq B_\varphi \|f\|^2, \qquad \forall f \in \H.
$$
The constant $B_\varphi$ is called a {\it Bessel bound} of $\varphi$. A sequence $\varphi=\{\varphi_n\}_{n\in \N}$ is a {\it frame} of $\H$ if there exist $A,B>0$ ({\it lower bound} and {\it upper bound} of $\varphi$, respectively) such that
\begin{equation}
\label{def_frame}
A \|f\|^2\leq \sum_{n\in \N} |\pin{f}{\varphi_n}|^2\leq B \|f\|^2, \qquad \forall f \in \H.
\end{equation}
Now let $\varphi=\{\varphi_n\}_{n\in \N},\psi=\{\psi_n\}_{n\in \N}$ be Bessel sequences of $\H$ and $m=\{m_n\}_{n\in \N}\in \ell^\infty$, i.e. a bounded complex sequence. The operator $M_{m,\varphi,\psi}$ given by 
$$M_{m,\varphi,\psi}f=\sum_{n=1}^\infty m_n\pin{f}{\psi_n} \varphi_n \qquad f\in \H,$$ 
is called the {\it Bessel multiplier} of $\varphi$, $\psi$ with {\it symbol}  $m$. Correspondent versions of Bessel multipliers have been studied also in continuous and distributional contexts 
(see \cite{ Mult_cont,Corso_distr_mult,TTT}). A Bessel multiplier $M_{m,\varphi,\psi}$ is called a {\it frame multiplier} if $\varphi$ and $\psi$ are frames.  

This paper deals with the spectra of {\it dual frames multipliers}, i.e. multipliers $M_{m,\varphi,\psi}$ where $\varphi$ and $\psi$ are dual frames and $m\in \ell^\infty$. Two frames $\varphi$ and $\psi$ of $\H$ are called {\it dual} if $f=\sum_{n\in \N} \pin{f}{\varphi_n}\psi_n$ for every $f\in \H$ (or, equivalently, $f=\sum_{n\in \N} \pin{f}{\psi_n}\varphi_n$ for every $f\in \H$). In particular, the study of this paper is inspired by the following result for Bessel multipliers, which is an immediate consequence of \cite[Theorem 6.1]{Balazs_basic_mult}, i.e. of the fact that 
\begin{equation}
\label{norm_Bessel}
\n{M_{m,\varphi,\psi}} \leq \sup_{n\in \N} |m_n| {B_\varphi}^\mez {B_\psi}^\mez,
\end{equation}
where $M_{m,\varphi,\psi}$ is any Bessel multiplier with $B_\varphi$ and $B_\psi$ some Bessel bounds of $\varphi$ and $\psi$, respectively.

\begin{pro}
	\label{spec_intro}
	The spectrum of any Bessel multiplier $M_{m,\varphi,\psi}$ is contained in the closed disk centered the origin with radius $\sup_{n\in \N} |m_n| {B_\varphi}^\mez {B_\psi}^\mez$, where $B_\varphi$ and $B_\psi$ are Bessel bounds of $\varphi$ and $\psi$, respectively. 
\end{pro}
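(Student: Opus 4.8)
The plan is to reduce the statement to the elementary fact that the spectrum of a bounded operator on a Hilbert space is contained in the closed disk, centered at the origin, of radius equal to the operator norm, and then to bound the norm of $M_{m,\varphi,\psi}$ by means of \eqref{norm_Bessel}.

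First I would recall why $M_{m,\varphi,\psi}$ is a bounded operator and why \eqref{norm_Bessel} holds. Writing $C_\psi f=\{\pin{f}{\psi_n}\}_{n\in\N}$ for the analysis operator of $\psi$, $D_\varphi\{c_n\}_{n\in\N}=\sum_{n\in\N}c_n\varphi_n$ for the synthesis operator of $\varphi$, and $\mathfrak{M}_m\{c_n\}_{n\in\N}=\{m_nc_n\}_{n\in\N}$ for the multiplication operator on $\ell^2$ by $m$, one has the factorisation $M_{m,\varphi,\psi}=D_\varphi\,\mathfrak{M}_m\,C_\psi$. The Bessel property of $\psi$ gives $\n{C_\psi}\leq {B_\psi}^\mez$, a duality/adjointness argument gives $\n{D_\varphi}\leq {B_\varphi}^\mez$, and clearly $\n{\mathfrak{M}_m}=\sup_{n\in\N}|m_n|$; multiplying these three bounds yields \eqref{norm_Bessel}. (As noted in the excerpt, this is essentially \cite[Theorem 6.1]{Balazs_basic_mult}, so it may just be quoted.)

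Next, set $R:=\sup_{n\in\N}|m_n|{B_\varphi}^\mez{B_\psi}^\mez$, so that $\n{M_{m,\varphi,\psi}}\leq R$ by \eqref{norm_Bessel}. For any $\lambda\in\C$ with $|\lambda|>R$ one has $\n{\lambda^{-1}M_{m,\varphi,\psi}}<1$, hence $I-\lambda^{-1}M_{m,\varphi,\psi}$ is invertible (its inverse being given by the Neumann series), and therefore $\lambda I-M_{m,\varphi,\psi}=\lambda(I-\lambda^{-1}M_{m,\varphi,\psi})$ is invertible; that is, $\lambda$ lies in the resolvent set of $M_{m,\varphi,\psi}$. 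Equivalently, $\sigma(M_{m,\varphi,\psi})\subseteq\{\lambda\in\C:|\lambda|\leq R\}$, which is the assertion.

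There is essentially no hard step here: the result is merely the combination of \eqref{norm_Bessel} with the classical inclusion $\sigma(T)\subseteq\{\lambda\in\C:|\lambda|\leq\n{T}\}$ for a bounded operator $T$. The only points requiring a line of care are that \eqref{norm_Bessel} be applied with $B_\varphi$ and $B_\psi$ genuine Bessel bounds (any admissible choice works, and the resulting disk is smallest for the optimal ones), and that $\sup_{n\in\N}|m_n|$ is finite, which holds since $m\in\ell^\infty$.
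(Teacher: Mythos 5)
Your proposal is correct and follows exactly the route the paper intends: the paper presents Proposition \ref{spec_intro} as an immediate consequence of the norm estimate \eqref{norm_Bessel} together with the standard inclusion $\sigma(T)\subseteq\{\lambda\in\C:|\lambda|\leq\n{T}\}$, which is precisely what you spell out (including the Neumann-series justification the paper leaves implicit).
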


Proposition \ref{spec_intro} gives an information about the location of the spectra of Bessel multipliers in the complex plane. However, the given estimate may be too large for the spectra of dual frames multipliers\footnote{Indeed, if $m\in \ell^\infty$, $\varphi$ and $\psi$ are dual Riesz bases (for the definition see the end part of Section \ref{sec:pre}), then the spectrum of $M_{m,\varphi,\psi}$ is the closure of the set $\{m_n\}_{n\in \N}$ (see \cite[Proposition 2.1]{Bag_Riesz} or \cite[Section 5.1]{Corso_seq}), which is in general smaller than the closed disk centered the origin with radius $\sup_{n\in \N} |m_n| {B_\varphi}^\mez {B_\psi}^\mez$.}. The main results of the paper, Theorems \ref{th_pert_Riesz} and \ref{th_pertIV}, provide more accurate localization results for the spectra of dual frames multipliers $M_{m,\varphi,\psi}$ under some conditions on $\varphi$ and $\psi$. We also stress that these conditions occur in many frames used in applications (see Remark \ref{Gabor}). 

A localization of the spectrum of $M_{m,\varphi,\psi}$ may show that $M_{m,\varphi,\psi}$ is invertible. The invertibility of multipliers was a subject faced in \cite{Balazs_inv_mult2,Corso_mult1,Balazs_inv_mult,Detail_mult,Riesz_mult,Dual_mult} and Theorems \ref{th_pert_Riesz} and \ref{th_pertIV} bring new results in this direction (see Remark \ref{rem_new}).  

Moreover, the knowledge of a region containing the spectrum of $M_{m,\varphi,\psi}$ gives, in particular, information about the distribution of the possible eigenvalues of $M_{m,\varphi,\psi}$. In connection with this subject, recently in \cite{Corso_mult1} some types of dual frames multipliers with at most countable spectra have been studied. 

The paper is organized as follows. In Section \ref{sec:pre} we recall some basic notions of frame theory, while we give some preliminary localization results about the spectra of dual frames multipliers in Section \ref{sec_bas}. Finally, Sections \ref{sec_case1} and \ref{sec_case2} contain the main results mentioned above together with examples.

\section{Preliminaries}
\label{sec:pre}

Throughout the paper $\H$ indicates a separable Hilbert space with inner product $\pint$ and norm $\nor$. 
Given an operator $T$ acting between two Hilbert space $\H_1$ and $\H_2$, we denote by  $R(T)$ and $N(T)$ the  {\it range} and {\it kernel} of $T$, respectively, and by $T^*$ its {\it adjoint} when $T$ is bounded. 

If $T:\H\to \H$ is a bounded operator, then we write $\rho(T)$ and $\sigma(T)$ for the {\it resolvent set} and {\it spectrum} of $T$, respectively. 
Throughout the paper we will apply the following standard perturbation result (for a reference see, for instance, Theorem IV.1.16 of \cite{Kato}). 

\begin{lem}
	\label{lem_pert}
	Let $T,B:\H\to \H$ be bounded operators. If $T$ is bijective and $\|B\|<\|T^{-1}\|^{-1}$, then $T+B$ is bijective. 
\end{lem}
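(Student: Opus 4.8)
The plan is to factor $T+B$ through $T$ and thereby reduce the claim to the invertibility of a small perturbation of the identity. First I would observe that, since $T$ is a bounded bijection of $\H$, the bounded inverse theorem guarantees that $T^{-1}$ is itself a bounded operator on $\H$; this is what makes the hypothesis $\|B\|<\|T^{-1}\|^{-1}$ meaningful and what allows the factorization
$$
T+B = T\bigl(I+T^{-1}B\bigr).
$$

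Next I would estimate $\|T^{-1}B\|\le \|T^{-1}\|\,\|B\|<1$, using submultiplicativity of the operator norm together with the hypothesis. Since the algebra of bounded operators on $\H$ is complete, the Neumann series $\sum_{k=0}^{\infty}(-T^{-1}B)^k$ then converges absolutely, and a routine telescoping computation shows that its sum is a two-sided inverse of $I+T^{-1}B$. Hence $I+T^{-1}B$ is a bounded bijection of $\H$.

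Finally, $T+B$ is the composition of the two bounded bijections $T$ and $I+T^{-1}B$, so it is a bounded bijection as well, with inverse $(I+T^{-1}B)^{-1}T^{-1}$. The only point requiring any care — and it is a standard one — is the invocation of the bounded inverse theorem to know $T^{-1}$ is bounded; once that is in place, the argument is the familiar Neumann series estimate and presents no real obstacle.
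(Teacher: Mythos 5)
Your proof is correct: the factorization $T+B=T(I+T^{-1}B)$ combined with the Neumann series for $(I+T^{-1}B)^{-1}$ is precisely the standard argument, and the appeal to the bounded inverse theorem for $T^{-1}$ is handled properly. The paper does not prove this lemma itself but cites Theorem IV.1.16 of Kato, whose proof is essentially the same Neumann-series perturbation argument you give.
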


We denote by $\ell^2$ (respectively, $\ell^\infty$)  the usual spaces of square summable (respectively, bounded) complex sequences indexed by $\N$. A {\it limit point} for $m\in \ell^\infty$ is the limit of a converging subsequence of $m$. 

In the introduction we gave the definitions of Bessel sequences and frames. Here, we recall some other notions and elementary results about frame theory \cite{Chris}. 
A sequence $\varphi=\{\varphi_n\}_{n\in \N}$ is {\it complete} in $\H$ if its linear span is dense in $\H$ if and only if $\pin{\varphi_n}{f}=0$ for every $n\in \N$ implies $f=0$.  A frame for $\H$ is, in particular, complete in $\H$.

Let $\varphi$ be a frame for $\H$. We say that $\varphi$ is a {\it Parseval frame} if \eqref{def_frame} holds with $A=B=1$.
The operator $S:\H \to \H$, defined by 
$$
S f =\sum_{n\in I} \pin{f}{\varphi_n}\varphi_n \qquad f\in \H,
$$
is well-defined, bounded, bijective and it called the {\it frame operator} of $\varphi$. The sequence  $\{S^{-1} \varphi_n\}_{n\in \N}$ is a dual frame of $\varphi$, called the {\it canonical dual}, and $\{S^{-\mez} \varphi_n\}_{n\in \N}$ is a Parseval frame for $\H$, called the {\it canonical Parseval frame} of $\varphi$.

A {\it Riesz basis} $\varphi$ for $\H$  is a complete sequence in $\H$ satisfying for some $A,B>0$ 	
\begin{equation}
\label{def_Riesz}
	A \sum_{n\in \N} |c_n|^2 \leq \left \| \sum_{n\in \N} c_n \varphi_n \right \|^2 \leq B\sum_{n\in \N} |c_n|^2, \qquad\forall \{c_n\}\in \ell^2.
\end{equation}

\no A Riesz basis $\varphi$ for $\H$ is a frame for $\H$, the constants in \eqref{def_frame} can be chosen as in \eqref{def_Riesz} and the canonical dual of $\varphi$ is a Riesz basis too (called {\it dual Riesz basis} of $\varphi$).

\section{Basic localization results}
\label{sec_bas}

In this section we give two preliminary localization results of the spectra of dual frames multipliers (Propositions \ref{num_range_M} and \ref{pertII}) without requiring specific properties of the two frames. For the first one we need the notion of numerical range. Given a bounded operator $T:\H\to \H$ the {\it numerical range} of $T$ is the set $\rn_T=\{\pin{Tf}{f}:f\in \H,  \n{f}=1\}$. We recall also that the spectrum of $T$ is contained in the closure of $\rn_T$ (see \cite[Corollary V.3.3]{Kato}). 

In addition, we are going to use the following lemma, which state that to examine the spectrum of a dual frame multiplier $M_{m,\varphi,\psi}$ where $\psi$ is, in particular, the canonical dual frame of $\varphi$, we can just consider a  multiplier determined by a Parseval frame.

\begin{lem}
	\label{spec_agg}
	Let $\varphi$ be a frame for $\H$, $\psi$ its canonical dual frame and $m\in \ell^\infty$. Then  $M_{m,\varphi,\psi}$ is similar to $M_{m,\rho,\rho}$, where $\rho$ is the Parseval frame associated to $\varphi$, and so $\sigma(M_{m,\varphi,\psi})=\sigma(M_{m,\rho,\rho})$. In particular, if $m$ is a real (resp., non-negative) sequence, then  $M_{m,\varphi,\psi}$ is similar to a self-adjoint  (resp., non-negative) operator and $\sigma(M_{m,\varphi,\psi})$ is real (resp., non-negative).  	
\end{lem}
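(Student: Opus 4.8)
The plan is to exhibit an explicit similarity transformation between $M_{m,\varphi,\psi}$ and $M_{m,\rho,\rho}$, where $\rho = \{S^{-1/2}\varphi_n\}_{n\in\N}$ is the canonical Parseval frame of $\varphi$ and $S$ is the frame operator of $\varphi$. Since $\psi$ is the canonical dual, $\psi_n = S^{-1}\varphi_n$, so that
$$
M_{m,\varphi,\psi}f = \sum_{n\in\N} m_n \pin{f}{S^{-1}\varphi_n}\varphi_n = \sum_{n\in\N} m_n \pin{S^{-1}f}{\varphi_n}\varphi_n,
$$
using self-adjointness of $S^{-1}$. First I would rewrite each term by inserting $S^{-1/2}S^{1/2}$ appropriately: writing $\pin{S^{-1}f}{\varphi_n} = \pin{S^{-1/2}f}{S^{-1/2}\varphi_n} = \pin{S^{-1/2}f}{\rho_n}$ and $\varphi_n = S^{1/2}\rho_n$, one gets
$$
M_{m,\varphi,\psi}f = \sum_{n\in\N} m_n \pin{S^{-1/2}f}{\rho_n} S^{1/2}\rho_n = S^{1/2}\Big(\sum_{n\in\N} m_n \pin{S^{-1/2}f}{\rho_n}\rho_n\Big) = S^{1/2} M_{m,\rho,\rho} S^{-1/2} f.
$$
Hence $M_{m,\varphi,\psi} = S^{1/2} M_{m,\rho,\rho} S^{-1/2}$, which is the asserted similarity; since similar operators have equal spectra, $\sigma(M_{m,\varphi,\psi}) = \sigma(M_{m,\rho,\rho})$.

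Next I would verify that $M_{m,\rho,\rho}$ has the claimed self-adjointness properties when $m$ is real. For a Parseval frame $\rho$ and any $f,g\in\H$ one computes
$$
\pin{M_{m,\rho,\rho}f}{g} = \sum_{n\in\N} m_n \pin{f}{\rho_n}\pin{\rho_n}{g},
$$
and since $m_n\in\R$ this equals $\overline{\sum_{n\in\N}\overline{m_n}\pin{g}{\rho_n}\pin{\rho_n}{f}} = \overline{\pin{M_{m,\rho,\rho}g}{f}}\,$? — more directly, $\pin{M_{m,\rho,\rho}f}{g} = \pin{f}{M_{m,\rho,\rho}g}$ because conjugating swaps the two inner products and $m_n = \overline{m_n}$. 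So $M_{m,\rho,\rho}$ is self-adjoint. If moreover $m_n\geq 0$, then $\pin{M_{m,\rho,\rho}f}{f} = \sum_{n\in\N} m_n |\pin{f}{\rho_n}|^2 \geq 0$, so $M_{m,\rho,\rho}$ is non-negative. Because similarity by $S^{1/2}$ (with $S^{1/2}$ positive, bounded, boundedly invertible) preserves the property of being similar to a self-adjoint resp. non-negative operator, the same holds for $M_{m,\varphi,\psi}$. Finally, $\sigma$ of a self-adjoint operator is real and $\sigma$ of a non-negative operator is contained in $[0,\infty)$, and spectra are similarity-invariant, so $\sigma(M_{m,\varphi,\psi})$ is real (resp.\ non-negative).

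The only genuinely delicate point is the manipulation of the infinite sums when pulling $S^{1/2}$ out of the series — one should note that $S^{1/2}$ is bounded, hence continuous, and that the partial sums of $\sum m_n\pin{S^{-1/2}f}{\rho_n}\rho_n$ converge in $\H$ (this is exactly the content of $M_{m,\rho,\rho}$ being a well-defined Bessel multiplier, since $\rho$ is Bessel with bound $1$ and $m\in\ell^\infty$), so $S^{1/2}$ commutes with the limit. Everything else is a routine substitution using $\varphi_n = S^{1/2}\rho_n$, $\psi_n = S^{-1}\varphi_n = S^{-1/2}\rho_n$, and self-adjointness of the positive operators $S^{\pm 1/2}$. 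I do not anticipate any real obstacle beyond being careful that $\rho$ is indeed Parseval, which is recalled in Section~\ref{sec:pre}.
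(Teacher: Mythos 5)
Your proposal is correct and follows exactly the paper's route: the paper's proof simply states that $S^{-\frac12}M_{m,\varphi,\psi}S^{\frac12}=M_{m,\rho,\rho}$ is ``immediate to see'' and that the rest follows easily, and your computation (inserting $S^{-1/2}S^{1/2}$, using $\psi_n=S^{-1}\varphi_n$, $\varphi_n=S^{1/2}\rho_n$ and the self-adjointness of $S^{\pm 1/2}$, then checking self-adjointness and non-negativity of $M_{m,\rho,\rho}$ directly) is precisely the verification the author leaves to the reader. No gaps; the remark about pulling the bounded operator $S^{1/2}$ through the convergent series is the right level of care.
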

\begin{proof}
	Let $S$ be the frame operator of $\varphi$, which we recall is a bijective operator. It is immediate to see that $S^{-\mez}M_{m,\varphi,\psi}S^{\mez}=M_{m,\rho,\rho}$, where $\rho=S^{-\mez}\varphi$ is the Parseval frame associated to $\varphi$. The rest of the statement follows easily.  
\end{proof}

\begin{pro}
	\label{num_range_M}
	Let $\varphi$ be a frame for $\H$, $\psi$ its canonical dual and $m\in \ell^\infty$. Then $\sigma(M_{m,\varphi,\psi})$ is contained in the closed convex hull of $m$, i.e. the closure of the set $\{\sum_{n\in \N}a_n m_n: \sum_{n\in \N}|a_n|^2=1\}$.
\end{pro}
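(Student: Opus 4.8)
The plan is to use Lemma \ref{spec_agg} to reduce to the case of a Parseval frame, and then to compute (or estimate) the numerical range of $M_{m,\rho,\rho}$ directly. By Lemma \ref{spec_agg} we have $\sigma(M_{m,\varphi,\psi})=\sigma(M_{m,\rho,\rho})$, where $\rho=\{\rho_n\}_{n\in \N}$ is the canonical Parseval frame associated to $\varphi$. Since $\sigma(M_{m,\rho,\rho})$ is contained in the closure of the numerical range $\rn_{M_{m,\rho,\rho}}$, it suffices to show that every $\pin{M_{m,\rho,\rho}f}{f}$ with $\n f=1$ lies in the set $C:=\{\sum_{n\in \N}a_n m_n: \sum_{n\in \N}|a_n|^2=1\}$ (or at least in its closure), and then pass to the closed convex hull.

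The key computation is the following: for $f\in\H$ with $\n f=1$,
\[
\pin{M_{m,\rho,\rho}f}{f}=\Big\langle \sum_{n\in\N} m_n\pin{f}{\rho_n}\rho_n,\, f\Big\rangle=\sum_{n\in\N} m_n\,|\pin{f}{\rho_n}|^2.
\]
Because $\rho$ is a Parseval frame, $\sum_{n\in\N}|\pin{f}{\rho_n}|^2=\n f^2=1$. So setting $b_n:=|\pin{f}{\rho_n}|^2\ge 0$ we get $\pin{M_{m,\rho,\rho}f}{f}=\sum_{n\in\N} b_n m_n$ with $\sum_{n\in\N}b_n=1$, $b_n\ge0$; in other words, every point of $\rn_{M_{m,\rho,\rho}}$ is a (convergent, $\ell^1$) convex combination of the $m_n$. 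Such a convex combination can be written as $\sum_{n\in\N}a_n m_n$ with $a_n:=\sqrt{b_n}$, so that $\sum_{n\in\N}|a_n|^2=\sum_{n\in\N}b_n=1$; hence $\pin{M_{m,\rho,\rho}f}{f}\in C$. Since $\rn_{M_{m,\rho,\rho}}\subseteq C$, taking closures gives $\sigma(M_{m,\rho,\rho})\subseteq \ol{\rn_{M_{m,\rho,\rho}}}\subseteq \ol C$, and $\ol C$ is precisely the set described in the statement. (One should note that $\ol C$ is automatically convex: it contains all finite convex combinations of the $m_n$ — take $a_n$ supported on two indices — and is closed, hence contains the closed convex hull; conversely every element of $C$ is a limit of finite convex combinations, so $\ol C$ equals the closed convex hull of $\{m_n\}$.) Finally, by Lemma \ref{spec_agg}, $\sigma(M_{m,\varphi,\psi})=\sigma(M_{m,\rho,\rho})\subseteq \ol C$.

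I do not expect a serious obstacle here; the only points requiring a little care are the interchange of sum and inner product in the displayed computation (justified by the Bessel/Parseval property, which makes the relevant series converge absolutely since $|m_n|\le\n m_\infty$ and $\sum_n|\pin{f}{\rho_n}|^2<\infty$), and checking that the set $C$ in the statement genuinely coincides with the closed convex hull of $\{m_n\}_{n\in\N}$ rather than something larger or smaller. Both are routine, so the substance of the argument is entirely the one-line numerical-range computation combined with the reduction of Lemma \ref{spec_agg}.
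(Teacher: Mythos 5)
Your argument is essentially identical to the paper's proof: reduce to a Parseval frame via Lemma \ref{spec_agg}, compute $\pin{M_{m,\rho,\rho}f}{f}=\sum_{n\in\N}m_n|\pin{f}{\rho_n}|^2$, and use $\sum_{n\in\N}|\pin{f}{\rho_n}|^2=\n{f}^2=1$ together with the inclusion of the spectrum in the closure of the numerical range. One small slip: setting $a_n=\sqrt{b_n}$ gives $\sum_n a_n m_n=\sum_n\sqrt{b_n}\,m_n$, which is \emph{not} the convex combination $\sum_n b_n m_n$; the natural choice is $a_n=\pin{f}{\rho_n}$ with the set read as $\{\sum_n|a_n|^2 m_n:\sum_n|a_n|^2=1\}$ (the formula in the statement as literally printed does not even converge for general $m\in\ell^\infty$), but this affects only the cosmetic matching with the displayed set, not the substance, which is exactly the paper's one-line numerical-range computation.
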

\begin{proof}
	Since the spectrum is preserved by similarity, we can confine to the case where $\varphi=\psi$ is a Parseval frame by Lemma \ref{spec_agg}. We note that 
	$$\pin{M_{m,\varphi,\varphi} f}{f}=\sum_{n\in \N} m_n|\pin{f}{\varphi_n}|^2,$$ therefore, because $\n{f}^2=\sum_{n\in \N} |\pin{f}{\varphi_n}|^2$, the numerical range (and then also the spectrum) of $M_{m,\varphi,\varphi}$ is contained in the closed convex hull of $m$. 
\end{proof}

\begin{rem}
	\begin{enumerate}
		\item[(i)] Under the hypothesis of Proposition \ref{num_range_M}, if in addition $m$ is a real sequence, we have that $\sigma(M_{m,\varphi,\psi})\subset [\inf_{n\in \N} m_n , \sup_{n\in \N}m_n ]$. 
		\item[(ii)] If $\varphi$ is not a Parseval frame and $\psi$ is its canonical dual, then the numerical range of $M_{m,\varphi,\psi}$ is not necessarily contained in the closed convex hull of $m$ (even though $\sigma(M_{m,\varphi,\psi})$ is). This follows from the fact that the numerical range is not invariant under similarity. 
	\end{enumerate}
\end{rem}

The statement of Proposition \ref{num_range_M} may not hold if $\psi$ is just a dual frame of $\varphi$. For example, take $\varphi=\{e_1,e_1,e_2,\dots, e_n, \dots\}$, $\psi=\{ie_1,(1-i)e_1,e_2,\dots, e_n, \dots\}$, where $\{e_n\}_{n\in \N}$ is an orthonormal basis of $\H$ and $m=\{2,1,1,\dots\}$.  For generic dual frames we can actually state the following.

\begin{pro}
	\label{pertII}
	Let $\varphi,\psi$ be dual frames for $\H$ with upper bounds $B_\varphi,B_\psi$, respectively. Let $m\in \ell^\infty$. If $\lambda,\mu\in \C$ and 
	\begin{equation}
	\label{cond_pertII}
	\|m-\mu\|_\infty B_\varphi^\mez B_\psi^\mez<|\mu-\lambda|,
	\end{equation}
	then $\lambda \in \rho(M_{m,\varphi,\psi})$. In particular,
	\begin{enumerate}
		\item if $m$ is contained in the disk of center $\mu$ with radius $r$, then $\sigma(M_{m,\varphi,\psi})$ is contained in the disk of center $\mu$ with radius $rB_\varphi^\mez B_\psi^\mez$;
		\item if $m$ is real, then $\sigma(M_{m,\varphi,\psi})$ is contained in the disk of center $\frac 12 (\sup_{n\in \N} m_n +\inf_{n\in \N} m_m)$ with radius $\frac 12 (\sup_{n\in \N} m_n -\inf_{n\in \N} m_m)B_\varphi^\mez B_\psi^\mez$.
	\end{enumerate}
\end{pro}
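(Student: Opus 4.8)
The plan is to reduce the statement to the elementary perturbation result in Lemma \ref{lem_pert}, exploiting the one feature that distinguishes dual frames from arbitrary Bessel sequences: the constant symbol $\mathbf 1=\{1\}_{n\in\N}$ produces the identity. Indeed, since $\varphi,\psi$ are dual, $M_{\mathbf 1,\varphi,\psi}f=\sum_{n\in\N}\pin{f}{\psi_n}\varphi_n=f$ for all $f\in\H$, so $M_{\mathbf 1,\varphi,\psi}=I$; by linearity of $M_{m,\varphi,\psi}$ in the symbol this gives $M_{\mu\mathbf 1,\varphi,\psi}=\mu I$ and, writing $m-\mu$ for the sequence $\{m_n-\mu\}_{n\in\N}\in\ell^\infty$,
\[
M_{m,\varphi,\psi}-\lambda I=\big(M_{m,\varphi,\psi}-\mu I\big)+(\mu-\lambda)I=M_{m-\mu,\varphi,\psi}+(\mu-\lambda)I.
\]

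Next I would apply Lemma \ref{lem_pert} with $T=(\mu-\lambda)I$ and $B=M_{m-\mu,\varphi,\psi}$. When $\lambda\neq\mu$, $T$ is bijective with $\n{T^{-1}}^{-1}=|\mu-\lambda|$, while $B$ is a Bessel multiplier of $\varphi,\psi$ with symbol $m-\mu$, so \eqref{norm_Bessel} yields $\n{B}\le\|m-\mu\|_\infty B_\varphi^\mez B_\psi^\mez$. Hypothesis \eqref{cond_pertII} is then precisely $\n{B}<\n{T^{-1}}^{-1}$, so $T+B=M_{m,\varphi,\psi}-\lambda I$ is bijective, i.e. $\lambda\in\rho(M_{m,\varphi,\psi})$ (the case $\lambda=\mu$ is vacuous since then \eqref{cond_pertII} cannot hold). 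This proves the main claim; there is essentially no obstacle, the only point to check carefully being the identity $M_{\mathbf 1,\varphi,\psi}=I$, which is just the definition of duality.

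Finally I would deduce the two particular cases. For (i), if $m$ lies in the closed disk of center $\mu$ and radius $r$, then $\|m-\mu\|_\infty\le r$; hence every $\lambda$ with $|\mu-\lambda|>rB_\varphi^\mez B_\psi^\mez$ satisfies \eqref{cond_pertII} and so belongs to $\rho(M_{m,\varphi,\psi})$, which means $\sigma(M_{m,\varphi,\psi})$ is contained in the closed disk of center $\mu$ and radius $rB_\varphi^\mez B_\psi^\mez$. For (ii), when $m$ is real it is contained in the interval $[\inf_{n\in\N}m_n,\sup_{n\in\N}m_n]$, hence in the closed disk of center $\mu=\tfrac12(\sup_{n\in\N}m_n+\inf_{n\in\N}m_n)$ and radius $r=\tfrac12(\sup_{n\in\N}m_n-\inf_{n\in\N}m_n)$; applying (i) gives the stated disk. (The boundedness of $m$ guarantees these sup and inf are finite.)
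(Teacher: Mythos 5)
Your proof is correct and follows essentially the same route as the paper: both use duality to write $M_{m,\varphi,\psi}-\lambda I=M_{m-\mu,\varphi,\psi}+(\mu-\lambda)I$, bound $\n{M_{m-\mu,\varphi,\psi}}$ via \eqref{norm_Bessel}, and conclude with Lemma \ref{lem_pert}. You simply spell out the details (including the vacuous case $\lambda=\mu$ and the two ``in particular'' items) that the paper leaves as immediate.
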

\begin{proof}
	If \eqref{cond_pertII} holds, then by \eqref{norm_Bessel} we have  $\|M_{m-\mu,\varphi,\psi}\|<|\mu-\lambda|$ and by Lemma \ref{lem_pert} we have  $\lambda-\mu\in \rho(M_{m-\mu,\varphi,\psi})$, i.e.  $\lambda\in \rho(M_{m,\varphi,\psi})$. Now, the rest of the statement is immediate.	
\end{proof}

When $\psi$ is the canonical dual of $\varphi$, Proposition \ref{num_range_M} gives a more accurate result than Proposition \ref{pertII}. 

\section{Main result 1}
\label{sec_case1}

For the localization result in this section we make the assumption that a frame contains a Riesz basis. Note that this is not a very strong requirement. Indeed, it is satisfied by many frames used in applications 
(see Remark \ref{Gabor} below for a consideration about Gabor and wavelet frames).

\begin{theo}
	\label{th_pert_Riesz}
	Let $\varphi,\psi$ be frames for $\H$ such that for some $I\subset \N$, $\{\varphi_{n}:n\in I\}$, $\{\psi_{n}:n\in I\}$ are Riesz bases for $\H$ with lower frame bounds $A_{\varphi,1}$ and $A_{\psi,1}$, respectively. Moreover, let $B_{\varphi,2}$ and $B_{\psi,2}$ be Bessel bounds of $\{\varphi_{n}:n\in \N\backslash I\}$ and  $\{\psi_{n}:n\in \N\backslash I\}$, respectively. Let $m\in \ell^\infty$. If 
	\begin{equation}
	\label{pert}
	\sup_{n\in \N\backslash I}{|m_{n}|}{B_{\varphi,2}}^\mez {B_{\psi,2}}^\mez< \inf_{n\in I}{|m_{n}|}{A_{\varphi,1}}^\mez {A_{\psi,1}}^\mez,
	\end{equation}
	then $M_{m,\varphi,\psi}$ is bijective. \\
	If, in addition, $\varphi$ and $\psi$ are dual frames and
	\begin{equation}
	\label{pert2}
	\sup_{n\in \N \backslash I}{|m_{n}-\lambda |}{B_{\varphi,2}}^\mez {B_{\psi,2}}^\mez< \inf_{n\in I}{|m_{n}-\lambda |}{A_{\varphi,1}}^\mez {A_{\psi,1}}^\mez,
	\end{equation}
	then $\lambda \in \rho(M_{m,\varphi,\psi})$. 
\end{theo}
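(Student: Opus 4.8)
The plan is to decompose $M_{m,\varphi,\psi}$ as $T+B$, where $T$ collects the summands indexed by $I$ and $B$ the remaining ones, to show that $T$ is bijective with a quantitative lower bound on $\n{T^{-1}}^{-1}$, to bound $\n{B}$ from above via \eqref{norm_Bessel}, and then to invoke the perturbation Lemma \ref{lem_pert} for the pair $(T,B)$. Explicitly, I would set
\[
Tf=\sum_{n\in I}m_n\pin{f}{\psi_n}\varphi_n,\qquad Bf=\sum_{n\in\N\back I}m_n\pin{f}{\psi_n}\varphi_n,\qquad f\in\H,
\]
so that $M_{m,\varphi,\psi}=T+B$. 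Here $B$ is the Bessel multiplier of the Bessel sequences $\{\varphi_n:n\in\N\back I\}$ and $\{\psi_n:n\in\N\back I\}$ (subfamilies of frames are Bessel) with symbol $\{m_n\}_{n\in\N\back I}\in\ell^\infty$, so \eqref{norm_Bessel} gives immediately $\n{B}\le\sup_{n\in\N\back I}|m_n|\,{B_{\varphi,2}}^\mez {B_{\psi,2}}^\mez$.

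Next I would analyse $T$ through the factorization $T=D_\varphi\,\Lambda\,C_\psi$, where $C_\psi:\H\to\ell^2(I)$, $C_\psi f=\{\pin{f}{\psi_n}\}_{n\in I}$, is the analysis operator of the Riesz basis $\{\psi_n:n\in I\}$; $\Lambda:\ell^2(I)\to\ell^2(I)$ is the multiplication by $\{m_n\}_{n\in I}$; and $D_\varphi:\ell^2(I)\to\H$, $D_\varphi\{c_n\}=\sum_{n\in I}c_n\varphi_n$, is the synthesis operator of the Riesz basis $\{\varphi_n:n\in I\}$. The synthesis operator of a Riesz basis is a bounded bijection which is bounded below by the square root of its lower bound, so $\n{D_\varphi^{-1}}\le {A_{\varphi,1}}^{-\mez}$; moreover $C_\psi$ is the adjoint of the synthesis operator $D_\psi$ of $\{\psi_n:n\in I\}$, hence $C_\psi^{-1}=(D_\psi^{-1})^*$ and $\n{C_\psi^{-1}}=\n{D_\psi^{-1}}\le {A_{\psi,1}}^{-\mez}$. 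Since \eqref{pert} forces its right-hand side to be strictly positive, $\inf_{n\in I}|m_n|>0$, so $\Lambda$ is bijective with $\n{\Lambda^{-1}}=\big(\inf_{n\in I}|m_n|\big)^{-1}$. Composing, $T$ is bijective and $\n{T^{-1}}\le\n{C_\psi^{-1}}\,\n{\Lambda^{-1}}\,\n{D_\varphi^{-1}}\le\big(\inf_{n\in I}|m_n|\,{A_{\varphi,1}}^\mez {A_{\psi,1}}^\mez\big)^{-1}$. I expect this bookkeeping to be the only delicate point --- the main (mild) obstacle: one must check that $\n{T^{-1}}$ is controlled purely by the \emph{lower} Riesz bounds ${A_{\varphi,1}},{A_{\psi,1}}$, with no contribution from the upper Riesz bounds, while $\n{B}$ is controlled purely by the Bessel bounds ${B_{\varphi,2}},{B_{\psi,2}}$.

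With these estimates in hand, hypothesis \eqref{pert} reads $\n{B}\le\sup_{n\in\N\back I}|m_n|\,{B_{\varphi,2}}^\mez {B_{\psi,2}}^\mez<\inf_{n\in I}|m_n|\,{A_{\varphi,1}}^\mez {A_{\psi,1}}^\mez\le\n{T^{-1}}^{-1}$, so Lemma \ref{lem_pert} applies and gives that $T+B=M_{m,\varphi,\psi}$ is bijective; this proves the first assertion. For the second one, I would apply the first assertion to the bounded symbol $m-\lambda$: the two Riesz bases and the two Bessel remainders are unchanged, and condition \eqref{pert} written for the symbol $m-\lambda$ is precisely \eqref{pert2}, hence $M_{m-\lambda,\varphi,\psi}$ is bijective. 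Finally, using that $\varphi$ and $\psi$ are now assumed dual frames, $\sum_{n\in\N}\pin{f}{\psi_n}\varphi_n=f$ for every $f\in\H$, whence $M_{m-\lambda,\varphi,\psi}=M_{m,\varphi,\psi}-\lambda I$; its bijectivity therefore means $\lambda\in\rho(M_{m,\varphi,\psi})$.
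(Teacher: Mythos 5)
Your proposal is correct and follows essentially the same route as the paper: the same splitting of $M_{m,\varphi,\psi}$ into the part indexed by $I$ and the Bessel remainder, the bound $\n{B}\leq\sup_{n\in\N\backslash I}|m_n|{B_{\varphi,2}}^\mez{B_{\psi,2}}^\mez$ from \eqref{norm_Bessel}, the lower bound $\inf_{n\in I}|m_n|{A_{\varphi,1}}^\mez{A_{\psi,1}}^\mez\leq\n{T^{-1}}^{-1}$, Lemma \ref{lem_pert}, and the identity $M_{m-\lambda,\varphi,\psi}=M_{m,\varphi,\psi}-\lambda I$ for dual frames. The only difference is cosmetic: where the paper cites Theorem 5.1 of \cite{Balazs_inv_mult} and Propositions 7.7 and 7.2 of \cite{Balazs_basic_mult} for the bijectivity of the Riesz part and the estimate on its inverse, you derive both directly from the factorization $T=D_\varphi\Lambda C_\psi$, which is the standard argument behind those cited results.
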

\begin{proof}
	We can write $M_{m,\varphi,\psi}=M_1+M_2$, where  $M_1=M_{m^{(1)},\varphi^{(1)},\psi^{(1)}}$ and $M_2=M_{m^{(2)},\varphi^{(2)},\psi^{(2)}}$, $m^{(1)}=\{m_n: {n\in I}\}$, $m^{(2)}=\{m_n: {n\in \N \backslash I}\}$, $\varphi^{(1)}=\{\varphi_n: {n\in I}\}$, $\varphi^{(2)}=\{\varphi_n: {n\in \N \backslash I}\}$, $\psi^{(1)}=\{\psi_n: {n\in I}\}$, $\psi^{(2)}=\{\psi_n: {n\in \N \backslash I}\}$. First of all,  $\inf_{n\in I}{|m_{n}|}>0$ holds by \eqref{pert}, so $M_1$ is bijective by 
	\cite[Theorem 5.1]{Balazs_inv_mult}. Moreover, \eqref{pert} allows to apply Lemma \ref{lem_pert} because   $$\n{M_2}\leq \sup_{n\in\N \backslash I}{|m_{n} |}{B_{\varphi,2}}^\mez {B_{\psi,2}}^\mez$$ by \eqref{norm_Bessel}, and $$\inf_{n\in I}{|m_{n} |}{A_{\varphi,1}}^\mez {A_{\psi,1}}^\mez\leq \n{M_1^{-1}}^{-1}$$ 
	by Propositions 7.7 and 7.2 of \cite{Balazs_basic_mult} and the fact that a Bessel bound of the canonical dual of $\varphi$ (resp., $\psi$) is ${A_{\varphi,1}}^{-1}$ (resp., ${A_{\psi,1}}^{-1}$). 
	The second part of the statement now follows from the fact that $M_{m,\varphi,\psi}-\lambda I=M_{m-\lambda,\varphi,\psi}$ when $\varphi$ and $\psi$ are dual frames. 
\end{proof}

We show an application of Theorem \ref{th_pert_Riesz} with an example of multiplier with $0-1$ symbol (i.e. a sequence made only of $0$ and $1$)\footnote{Such a multiplier often occurs in applications, see e.g. \cite{Balazs_surv}.}.

\begin{exm}
	\label{exm_1}
	Let $\varphi$ be a Parseval frame for $\H$ such that $\{\varphi_{2n}\}_{n\in \N}$ is a Riesz basis for $\H$ with lower bound $A$. Clearly, we have $0<A<1$. Consequently, $\{\varphi_{2n-1}\}$ is a Bessel sequence with bound $1-A$. \\	
	Let, moreover, $m$ be a sequence of $0$ and $1$.  With these choices, we apply Theorem \ref{th_pert_Riesz} to $M_{m,\varphi,\varphi}$. Condition \eqref{pert2} is 
	\begin{equation}
	\label{pert_exm1}
	\sup_{n\in \N} |m_{2n-1}-\lambda|(1-A)<\inf_{n\in \N} |m_{2n}-\lambda|A.
	\end{equation}
	We have
	\begin{equation*}
	\inf_{n\in \N} \{|-\lambda|,|1-\lambda|\}=
	\begin{cases}
	-\lambda \quad\quad\quad \lambda < 0,\\
	\lambda \hspace*{1.45 cm} 0\leq \lambda \leq \frac 1 2 ,\\
	1-\lambda \qquad \frac 12 <\lambda \leq 1,\\
	\lambda-1 \qquad 1<\lambda,
	\end{cases}
	\end{equation*}
	and 
	\begin{equation*}
	\sup_{n\in \N} \{|-\lambda|,|1-\lambda|\}=
	\begin{cases}
	1-\lambda \qquad\lambda < 0,\\
	1-\lambda \hspace*{0.8 cm} 0\leq \lambda \leq \frac 1 2 ,\\
	\lambda \hspace*{1.4 cm} \frac 12 <\lambda \leq 1,\\
	\lambda \hspace*{1.4 cm} 1<\lambda.\\
	\end{cases}
	\end{equation*}
	
	Since, by Proposition \ref{num_range_M}, we already know that $\sigma(M_{m,\varphi,\varphi})\subseteq [0,1]$, we need to check the validity of \eqref{pert_exm1} only for $0\leq\lambda\leq 1$. We note that if $0\leq \lambda \leq \frac 1 2$, then \eqref{pert_exm1} is true if and only if $\lambda>1-A$, which makes sense only if $A> \frac 12 $. On the other hand, if $\frac 12 <\lambda \leq 1$, then \eqref{pert_exm1} is true if and only if $\lambda<A$, which makes sense again only if $A> \frac 12 $. Thus, by Theorem \ref{th_pert_Riesz}, we can write  that if $A> \frac 12$ 
	$$\sigma(M_{m,\varphi,\varphi})\subseteq [0,1-A]\cup [A,1].$$	
\end{exm}

As particular case of Theorem \ref{th_pert_Riesz} we get the following. 

\begin{cor}
	Let $\varphi$ be a frame for $\H$ with bounds $A$ and $B$ such that for some $I\subset \N$ $\{\varphi_{n}:n\in I\}$ is a Riesz basis for $\H$ with lower frame bound $A'$. Let $\psi$ be the canonical dual of $\varphi$ and $m\in\ell^\infty$. If 
	\begin{equation*}
	\label{pert2'}
	\sup_{n\in \N \backslash I}{|m_{n}-\lambda |}\frac{B-A'}{A}< \inf_{n\in I}{|m_{n}-\lambda |}\frac{A'}{B},
	\end{equation*}
	then $\lambda \in \rho(M_{m,\varphi,\psi})$. 
\end{cor}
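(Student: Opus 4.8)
The plan is to derive this corollary as a direct specialization of Theorem~\ref{th_pert_Riesz} to the case $\varphi=\psi$ being replaced by the dual pair consisting of $\varphi$ and its canonical dual $\psi=\{S^{-1}\varphi_n\}_{n\in\N}$, where $S$ is the frame operator of $\varphi$. So the work is entirely bookkeeping: I need to identify the four quantities $A_{\varphi,1}$, $A_{\psi,1}$, $B_{\varphi,2}$, $B_{\psi,2}$ appearing in \eqref{pert2} in terms of the constants $A$, $B$, $A'$ available here, and check that \eqref{pert2} (with these substitutions) is implied by (indeed, equals after estimation) the hypothesis displayed in the corollary.

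First I would set $I\subset\N$ as given and take $A_{\varphi,1}=A'$, the lower Riesz basis bound of $\{\varphi_n:n\in I\}$. For the tail $\{\varphi_n:n\in\N\backslash I\}$ I need a Bessel bound $B_{\varphi,2}$: since $\{\varphi_n:n\in I\}$ is a Riesz basis it has an upper bound, and the whole frame $\varphi$ has upper bound $B$; a Bessel bound for the complementary subfamily is $B-A'$. Indeed, for any $f$, $\sum_{n\in\N\backslash I}|\langle f,\varphi_n\rangle|^2 = \sum_{n\in\N}|\langle f,\varphi_n\rangle|^2 - \sum_{n\in I}|\langle f,\varphi_n\rangle|^2 \le B\|f\|^2 - A'\|f\|^2$, using that $A'$ is a lower frame bound for the Riesz basis $\{\varphi_n:n\in I\}$. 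So $B_{\varphi,2}=B-A'$.

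Next, for the canonical dual $\psi=\{S^{-1}\varphi_n\}$ I recall that the frame bounds of $\psi$ are $B^{-1}$ (lower) and $A^{-1}$ (upper). Moreover $\{\psi_n:n\in I\}=\{S^{-1}\varphi_n:n\in I\}$ is again a Riesz basis for $\H$ (the image of a Riesz basis under the bounded invertible operator $S^{-1}$); its lower Riesz bound is at least $A'/B^2$ in general, but here I only need \emph{some} valid value, and I can take $A_{\psi,1}=A'/B^{2}$? Let me instead be careful: the cleanest route is to bound things so the final inequality comes out as stated. Actually the corollary's inequality has $\tfrac{B-A'}{A}$ on the left and $\tfrac{A'}{B}$ on the right; matching with ${B_{\varphi,2}}^{\mez}{B_{\psi,2}}^{\mez}$ and ${A_{\varphi,1}}^{\mez}{A_{\psi,1}}^{\mez}$ suggests taking $B_{\varphi,2}=B-A'$, $B_{\psi,2}=(B-A')/A^{2}$ (a Bessel bound for $\{\psi_n:n\in\N\backslash I\}$, obtained by the same subtraction argument applied to $\psi$: $A^{-1}-B^{-1}\cdot$... hmm), and $A_{\varphi,1}=A'$, $A_{\psi,1}=A'/B^{2}$. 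Then ${B_{\varphi,2}}^{\mez}{B_{\psi,2}}^{\mez}=(B-A')/A$ and ${A_{\varphi,1}}^{\mez}{A_{\psi,1}}^{\mez}=A'/B$, exactly the corollary's expressions. So the main technical point is to justify these four numbers, in particular that $(B-A')/A^{2}$ is a Bessel bound for the dual tail and that $A'/B^{2}$ is a valid lower Riesz bound for $\{S^{-1}\varphi_n:n\in I\}$; both follow from the standard estimates $\|S^{-1}\|\le A^{-1}$ and the Riesz basis transformation rule, possibly with a short computation showing $\sum_{n\in\N\backslash I}|\langle f,S^{-1}\varphi_n\rangle|^2\le A^{-2}(B-A')\|f\|^2$ via $\langle f,S^{-1}\varphi_n\rangle=\langle S^{-1}f,\varphi_n\rangle$.

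I expect the main obstacle to be getting the constants for the canonical dual exactly right — specifically verifying that $A'/B^{2}$ is a legitimate lower Riesz bound for $\{S^{-1}\varphi_n:n\in I\}$ and that $(B-A')/A^{2}$ is a legitimate Bessel bound for $\{S^{-1}\varphi_n:n\in\N\backslash I\}$ — since these involve composing the frame operator inverse with the subfamily estimates and one must be careful not to lose a factor. Once those four identifications are in place, the conclusion $\lambda\in\rho(M_{m,\varphi,\psi})$ is immediate from the second part of Theorem~\ref{th_pert_Riesz}, because $\varphi$ and its canonical dual $\psi$ are by definition dual frames, so hypothesis \eqref{pert2} reads precisely $\sup_{n\in\N\backslash I}|m_n-\lambda|\,\tfrac{B-A'}{A}<\inf_{n\in I}|m_n-\lambda|\,\tfrac{A'}{B}$.

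Here is the proof.

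\begin{proof}
Let $S$ be the frame operator of $\varphi$, so that $\psi=\{S^{-1}\varphi_n\}_{n\in\N}$ and $S$ is bounded and bijective with $\|S^{-1}\|\le A^{-1}$. We apply the second part of Theorem~\ref{th_pert_Riesz} with the same index set $I$; since $\psi$ is the canonical dual of $\varphi$, the frames $\varphi$ and $\psi$ are dual, so it suffices to exhibit admissible constants $A_{\varphi,1},A_{\psi,1},B_{\varphi,2},B_{\psi,2}$ for which \eqref{pert2} coincides with the displayed hypothesis.

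Take $A_{\varphi,1}=A'$, the lower Riesz bound of $\{\varphi_n:n\in I\}$. For every $f\in\H$,
\begin{equation*}
\sum_{n\in\N\backslash I}|\pin{f}{\varphi_n}|^2=\sum_{n\in\N}|\pin{f}{\varphi_n}|^2-\sum_{n\in I}|\pin{f}{\varphi_n}|^2\le B\n{f}^2-A'\n{f}^2,
\end{equation*}
using that $A'$ is also a lower frame bound of the Riesz basis $\{\varphi_n:n\in I\}$; hence $B_{\varphi,2}=B-A'$ is a Bessel bound of $\{\varphi_n:n\in\N\backslash I\}$.

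Since $S^{-1}$ is bounded and invertible, $\{\psi_n:n\in I\}=\{S^{-1}\varphi_n:n\in I\}$ is a Riesz basis for $\H$; for $\{c_n\}_{n\in I}\in\ell^2$,
\begin{equation*}
\left\|\sum_{n\in I}c_n\psi_n\right\|^2=\left\|S^{-1}\sum_{n\in I}c_n\varphi_n\right\|^2\ge \n{S}^{-2}\left\|\sum_{n\in I}c_n\varphi_n\right\|^2\ge \frac{A'}{B^2}\sum_{n\in I}|c_n|^2,
\end{equation*}
because $\n{S}\le B$. Thus $A_{\psi,1}=A'/B^2$ is a lower Riesz bound of $\{\psi_n:n\in I\}$. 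Finally, for $f\in\H$, using $\pin{f}{\psi_n}=\pin{f}{S^{-1}\varphi_n}=\pin{S^{-1}f}{\varphi_n}$ and the Bessel bound $B-A'$ just obtained for $\{\varphi_n:n\in\N\backslash I\}$,
\begin{equation*}
\sum_{n\in\N\backslash I}|\pin{f}{\psi_n}|^2=\sum_{n\in\N\backslash I}|\pin{S^{-1}f}{\varphi_n}|^2\le (B-A')\n{S^{-1}f}^2\le \frac{B-A'}{A^2}\n{f}^2,
\end{equation*}
so $B_{\psi,2}=(B-A')/A^2$ is a Bessel bound of $\{\psi_n:n\in\N\backslash I\}$.

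With these choices, ${B_{\varphi,2}}^\mez{B_{\psi,2}}^\mez=\frac{B-A'}{A}$ and ${A_{\varphi,1}}^\mez{A_{\psi,1}}^\mez=\frac{A'}{B}$, so the hypothesis of the corollary is exactly \eqref{pert2}. Theorem~\ref{th_pert_Riesz} then yields $\lambda\in\rho(M_{m,\varphi,\psi})$.
\end{proof}
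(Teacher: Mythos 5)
Your proof is correct and follows essentially the same route as the paper: both derive the corollary from Theorem \ref{th_pert_Riesz} by identifying the four constants as $A_{\varphi,1}=A'$, $B_{\varphi,2}=B-A'$, $A_{\psi,1}=A'/B^2$, $B_{\psi,2}=(B-A')/A^2$. You simply spell out the verifications (the subtraction argument for the tail Bessel bounds and the estimates $\|S\|\le B$, $\|S^{-1}\|\le A^{-1}$) that the paper leaves implicit.
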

\begin{proof}
	The statement follows by Theorem \ref{th_pert_Riesz} once noticed that $\{\psi_n\}_{n\in I}=\{S^{-1}\varphi_n\}_{n\in I}$ is a Riesz basis with lower bound $\frac{A'}{B^2}$, $\{\psi_n\}_{n\in \N\backslash I}$ has Bessel bound $B-A'$ and $\{\psi_n\}_{n\in\N\backslash I}=\{S^{-1}\varphi_n\}_{n\in \N\backslash I}$ has Bessel bound $\frac{B-A'}{A^2}$.
\end{proof}

\begin{rem}
	\label{Gabor}
	Gabor and wavelet frames are classical frames which occur in applications  (see \cite{Chris,Daubechies,Groechenig_b}). A (regular) Gabor frame for $L^2(\R)$ is a frame of the form 
	$$
	\mathcal{G}(g,a,b)=\{E_{b}^mT_{a}^ng\}_{m,n\in \Z}
	$$
	where $g\in L^2(\R)$, $a,b>0$, $
	(T_af)(x)=f(x-a)$ and $(E_b f)(x)=e^{2 \pi i b x}f(x)$ for $x \in \R$. A Gabor frame which is a finite union of Riesz bases can be easily constructed in this way. Let $N\in \N$ and $\mathcal{G}(g,a,b)$ a Riesz basis for $L^2(\R)$. 
	A simple calculation shows that  $\mathcal{G}(g,\frac a N,b)$ (as well as $\mathcal{G}(g,a,\frac b N)$) is a frame for $L^2(\R)$ which is a union of $N$ Riesz bases.  
	
	Frames which are unions of Riesz bases can be found also in the context of wavelet frames. In particular, the frame multiresolution analysis technique (see \cite[Ch. 17]{Chris}) gives a way to construct wavelet frames which are unions of Riesz bases. 
\end{rem}

\section{Main result 2}
\label{sec_case2}

In this section we consider Parseval frames $\varphi$ for $\H$ which are unions of multiples of orthonormal bases. In other words, we can think that there exists $k\in \N$ such that
\begin{equation}
\label{frame_locII}
\{\varphi_{(i-1)k+j}:i\in \N\}=\{\alpha_j e_i^j:i\in \N\},
\end{equation}
where $\alpha_j\in \C\backslash\{0\}$ and $\{e_i^j:i\in \N\}$ is orthonormal basis for $\H$ for $j=1,\dots, k$. Also here, we remark that this condition occurs for frames used in application.  For instance, following Remark \ref{Gabor}, if $\mathcal{G}(g,a,b)$ is an orthonormal basis for $L^2(\R)$, then $\frac 1 N\mathcal{G}(g,\frac a N,b)$ and $\frac 1 N\mathcal{G}(g, a ,\frac b N)$ are Parseval frames and unions of $N$ multiples of orthonormal bases.

\begin{theo}
	\label{th_pertIV}
	Let $\varphi$ be as in \eqref{frame_locII}, $m\in \ell^\infty$ and $l_1,\dots,l_k\in \C$. If $\lambda \in \C$ and
	\begin{equation}
	\label{pert_III}
	\sum_{j=1}^k |\alpha_j|^2 \sup_{i\in \N} |m_{(i-1)k+j} -l_j| <\left |\sum_{j=1}^k |\alpha_j|^2 l_j -\lambda \right |,
	\end{equation}
	then $\lambda \in \rho(M_{m,\varphi,\varphi})$.  As a  consequence, if $m$ is a real sequence, then  
	\begin{equation}
	\label{pert_IIIfin}
	\sigma(M_{m,\varphi,\varphi})\subseteq \left [\sum_{j=1}^k |\alpha_j|^2 \inf_{i\in \N} m_{(i-1)k+j},\sum_{j=1}^k |\alpha_j|^2 \sup_{i\in \N} m_{(i-1)k+j}\right ].
	\end{equation}
\end{theo}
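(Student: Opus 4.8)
The plan is to decompose $\varphi$ according to the $k$ "blocks" in \eqref{frame_locII} and reduce the multiplier to a direct-sum-like structure, then apply the perturbation Lemma \ref{lem_pert} exactly as in the proof of Theorem \ref{th_pert_Riesz}. Concretely, for each $j=1,\dots,k$ let $\varphi^{(j)}=\{\alpha_j e_i^j:i\in\N\}$, which is a (scaled orthonormal) frame with frame operator $|\alpha_j|^2 I$; write $M_j=M_{m^{(j)},\varphi^{(j)},\varphi^{(j)}}$ for the corresponding sub-multiplier, where $m^{(j)}=\{m_{(i-1)k+j}:i\in\N\}$. Then $M_{m,\varphi,\varphi}=\sum_{j=1}^k M_j$. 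The key computation is that, since $\{e_i^j:i\in\N\}$ is an orthonormal basis, each $M_j$ with constant symbol $l_j$ acts as $|\alpha_j|^2 l_j I$ (because $M_{l_j,\varphi^{(j)},\varphi^{(j)}}f=l_j\sum_i |\alpha_j|^2\pin{f}{e_i^j}e_i^j=|\alpha_j|^2 l_j f$); hence $\sum_{j=1}^k M_{l_j,\varphi^{(j)},\varphi^{(j)}}=\big(\sum_{j=1}^k|\alpha_j|^2 l_j\big) I$.

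First I would fix $\lambda\in\C$ satisfying \eqref{pert_III} and set $c=\sum_{j=1}^k|\alpha_j|^2 l_j$. Write $M_{m,\varphi,\varphi}-\lambda I = (cI-\lambda I) + \sum_{j=1}^k\big(M_j - M_{l_j,\varphi^{(j)},\varphi^{(j)}}\big) = (c-\lambda)I + \sum_{j=1}^k M_{m^{(j)}-l_j,\varphi^{(j)},\varphi^{(j)}}$. The operator $T=(c-\lambda)I$ is bijective with $\|T^{-1}\|^{-1}=|c-\lambda|$, which by hypothesis is strictly positive. The perturbation $B=\sum_{j=1}^k M_{m^{(j)}-l_j,\varphi^{(j)},\varphi^{(j)}}$ is estimated by the triangle inequality and \eqref{norm_Bessel}: since $\varphi^{(j)}$ has Bessel bound $|\alpha_j|^2$, we get $\|M_{m^{(j)}-l_j,\varphi^{(j)},\varphi^{(j)}}\|\leq |\alpha_j|^2\sup_{i\in\N}|m_{(i-1)k+j}-l_j|$, so $\|B\|\leq\sum_{j=1}^k|\alpha_j|^2\sup_{i\in\N}|m_{(i-1)k+j}-l_j|$. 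Condition \eqref{pert_III} says exactly $\|B\|<|c-\lambda|=\|T^{-1}\|^{-1}$, so Lemma \ref{lem_pert} gives that $M_{m,\varphi,\varphi}-\lambda I=T+B$ is bijective, i.e. $\lambda\in\rho(M_{m,\varphi,\varphi})$.

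For the final inclusion \eqref{pert_IIIfin}, assume $m$ real. By Lemma \ref{spec_agg} (with $\varphi=\psi$ a Parseval frame) $M_{m,\varphi,\varphi}$ is self-adjoint, so $\sigma(M_{m,\varphi,\varphi})\subset\R$; it then suffices to show every real $\lambda$ outside the stated interval lies in the resolvent set. For such $\lambda$, I would choose $l_j=\sup_i m_{(i-1)k+j}$ for all $j$ when $\lambda>\sum_j|\alpha_j|^2\sup_i m_{(i-1)k+j}$ (and symmetrically $l_j=\inf_i m_{(i-1)k+j}$ in the other case); then $\sup_i|m_{(i-1)k+j}-l_j|=l_j-\inf_i m_{(i-1)k+j}$ in the first case, and a short arithmetic check shows \eqref{pert_III} reduces to $\lambda>\sum_j|\alpha_j|^2\sup_i m_{(i-1)k+j}$, which holds by assumption; applying the first part finishes the argument.

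I expect the only mildly delicate point to be the convergence and well-definedness of the block decomposition $M_{m,\varphi,\varphi}=\sum_{j=1}^k M_j$ as an honest operator identity on all of $\H$ — this is routine since it is a finite sum and each $M_j$ is a bounded Bessel multiplier — together with the bookkeeping in the final step to verify that the chosen constants $l_j$ make \eqref{pert_III} equivalent to $\lambda$ lying outside the interval. Neither is a genuine obstacle; the heart of the proof is the observation that a constant symbol on a scaled-orthonormal block yields a scalar operator.
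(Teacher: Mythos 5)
Your first part is correct and is essentially the paper's own argument: the paper also writes $M_{m,\varphi,\varphi}=\bigl(\sum_{j=1}^k|\alpha_j|^2 l_j\bigr)I+M_{m',\varphi,\varphi}$ with $m'_{(i-1)k+j}=m_{(i-1)k+j}-l_j$, bounds $\|M_{m',\varphi,\varphi}\|\leq\sum_{j=1}^k|\alpha_j|^2\sup_{i\in\N}|m_{(i-1)k+j}-l_j|$ exactly via your block decomposition and \eqref{norm_Bessel}, and concludes with Lemma \ref{lem_pert}. The observation that a constant symbol on a scaled orthonormal block gives a scalar operator is precisely the paper's key step.

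The second part, however, contains a concrete error: your assignment of the $l_j$ to the two cases is reversed, and with your choice the ``short arithmetic check'' does not go through. Take $\lambda>\sum_{j=1}^k|\alpha_j|^2 s_j$ where $s_j=\sup_{i}m_{(i-1)k+j}$ and $t_j=\inf_{i}m_{(i-1)k+j}$, and choose $l_j=s_j$ as you propose. Then $\sup_i|m_{(i-1)k+j}-l_j|=s_j-t_j$ and \eqref{pert_III} reads
\[
\sum_{j=1}^k|\alpha_j|^2(s_j-t_j)<\lambda-\sum_{j=1}^k|\alpha_j|^2 s_j,
\]
i.e. $\lambda>\sum_{j=1}^k|\alpha_j|^2(2s_j-t_j)$, which is strictly stronger than $\lambda>\sum_{j=1}^k|\alpha_j|^2 s_j$ whenever some block symbol is non-constant; so \eqref{pert_III} need not hold for the $\lambda$ you are trying to exclude. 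The correct pairing is the opposite one: for $\lambda$ above the interval take $l_j=t_j$ (or any $l_j<t_j$), so that $\sup_i|m_{(i-1)k+j}-l_j|=s_j-l_j$ and \eqref{pert_III} becomes $\sum_j|\alpha_j|^2 s_j-c<\lambda-c$ with $c=\sum_j|\alpha_j|^2 l_j$, i.e. exactly $\lambda>\sum_j|\alpha_j|^2 s_j$; symmetrically take $l_j=s_j$ (or $l_j>s_j$) for $\lambda$ below the interval. This is what the paper does (it phrases it as reading off the endpoints of the interval $[c-R,c+R]$, $R=\sum_j|\alpha_j|^2\sup_i|m_{(i-1)k+j}-l_j|$, first with $l_j<\inf_i m_{(i-1)k+j}$ and then with $l_j>\sup_i m_{(i-1)k+j}$). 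With the choices swapped your argument is complete; as written, the final step fails.
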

\begin{proof}
	First of all, we note that $\lambda \neq \sum_{j=1}^k |\alpha_j|^2 l_j$ by \eqref{pert_III}. 
	We have
	$M_{m,\varphi,\varphi}f=\sum_{j=1}^k |\alpha_j|^2 l_j f  
	+M_{m',\varphi,\varphi}$ where $m'=\{m'_n\}$ and $m'_{(i-1)k+j}=m_{(i-1)k+j}-l_j$ for $i\in \N$ and $j=1,\dots,k$. Thus the first statement follows by Lemma \ref{lem_pert} noting that  $$\n{M_{m',\varphi,\varphi}}\leq \sum_{j=1}^k |\alpha_j|^2 \sup_{i\in \N} |m_{(i-1)k+j} -l_j|.$$ 
	Now assume that $m$ is real, i.e. $M_{m,\varphi,\varphi}$ is self-adjoint. Therefore \eqref{pert_III} implies that 
	\begin{equation}
	\label{1}
	\sigma(M_{m,\varphi,\varphi})\subseteq \left [\sum_{j=1}^k |\alpha_j|^2(l_j -\sup_{i\in \N} |m_{(i-1)k+j} -l_j|),\sum_{j=1}^k |\alpha_j|^2(l_j +\sup_{i\in \N} |m_{(i-1)k+j} -l_j|)\right ]
	\end{equation}
	Choosing in \eqref{1}, first $l_j<\inf_{i\in \N} m_{(i-1)k+j}$  and then $l_j>\sup_{i\in \N} m_{(i-1)k+j}$ for every $j=1,\dots, k$,  we find \eqref{pert_IIIfin}.
\end{proof}

\begin{exm}
	\label{exm_2}
	Let $\varphi=\{\frac{1}{\sqrt{2}}e_1,\frac{1}{\sqrt{2}}f_1, \frac{1}{\sqrt{2}}e_2, \frac{1}{\sqrt{2}}f_2, \dots\}$ where $\{e_n\}$ and $\{f_n\}$ are orthonormal bases for $\H$. Furthermore, let $m=\{m_n\}$ be such that $m_{4n-3}=0$,  $m_{4n-2}=\frac 13$, $m_{4n-1}=\frac 23$ and  $m_{4n}=1$,  $n\in \N$. 
	Taking into account Proposition \ref{num_range_M}, the spectrum of $M_{m,\varphi,\varphi}$ is contained in $[0,1]$. 
	This estimate can be improved by Theorem \ref{th_pertIV}: in particular, we obtain  that  $\sigma(M_{m,\varphi,\varphi})\subseteq[\frac 1 6,\frac 5 6] $. 	
\end{exm}

\begin{rem}
	Theorem \ref{th_pertIV} is not a special case of Theorem  \ref{th_pert_Riesz} (and vice-versa). In particular, Theorem \ref{th_pert_Riesz} gives no improvement on the localization of the spectrum in Example \ref{exm_2}. On the other hand,  Theorem \ref{th_pertIV} does not add any further information about the spectrum of the multiplier in Example \ref{exm_1}, even in the case where $\{\varphi_{2n}\}_{n\in \N}$ and $\{\varphi_{2n+1}\}_{n\in \N}$ are multiples of orthonormal bases.
\end{rem}

\begin{rem}
	\label{rem_new}
	Theorems \ref{th_pert_Riesz} and \ref{th_pertIV} give, in particular, new criteria of invertibility in comparison to the results in \cite{Balazs_inv_mult}. For instance, let  $\varphi=\{\frac{1}{\sqrt{2}}e_1,\frac{1}{\sqrt{2}}f_1, \frac{1}{\sqrt{2}}e_2, \frac{1}{\sqrt{2}}f_2, \dots\}$ where $\{e_n\}$ and $\{f_n\}$ are orthonormal bases for $\H$ and $m=\{m_n\}$ is such that $m_{2n-1}=\frac{1}{n+1}$ and $m_{2n}=2-\frac{1}{n+1}$, $n\geq 1$. Both Theorems \ref{th_pert_Riesz} and \ref{th_pertIV} show that 
	$\sigma(M_{m,\varphi,\varphi})\subseteq [\frac 3 4,\frac 54 ]$. In particular,
	$M_{m,\varphi,\varphi}$ is invertible. However, Propositions 4.1, 4.2 and 4.4 of \cite{Balazs_inv_mult} do not apply to this multiplier $M_{m,\varphi,\varphi}$. 
\end{rem}

\vspace*{0.5cm}
\begin{center}
\textsc{Rosario Corso, Dipartimento di Matematica e Informatica} \\
\textsc{Università degli Studi di Palermo, I-90123 Palermo, Italy} \\
{\it E-mail address}: {\bf rosario.corso02@unipa.it}
\end{center}

\end{document}